\tikzset{>=stealth}
\def\@tocline#1#2#3#4#5#6#7{\relax
  \ifnum #1>\c@tocdepth % then omit
  \else
    \par \addpenalty\@secpenalty\addvspace{#2}%
    \begingroup \hyphenpenalty\@M
    \@ifempty{#4}{%
      \@tempdima\csname r@tocindent\number#1\endcsname\relax
    }{%
      \@tempdima#4\relax
    }%
    \parindent\z@ \leftskip#3\relax \advance\leftskip\@tempdima\relax
    \rightskip\@pnumwidth plus4em \parfillskip-\@pnumwidth
    #5\leavevmode\hskip-\@tempdima
      \ifcase #1
       \or\or \hskip 2em \or \hskip 2em \else \hskip 3em \fi%
      #6\nobreak\relax
    \dotfill\hbox to\@pnumwidth{\@tocpagenum{#7}}\par
    \nobreak
    \endgroup
  \fi}
\newtheorem{intro-thm}{Theorem}[]
\theoremstyle{plain}
\newtheorem{thm}{Theorem}[section]
\newtheorem{theorem}[thm]{Theorem}
\newtheorem{lemma}[thm]{Lemma}
\newtheorem{corollary}[thm]{Corollary}
\newtheorem{proposition}[thm]{Proposition}
\theoremstyle{definition}
\newtheorem{remark}[thm]{Remark}
\newtheorem{definition}[thm]{Definition}
\newtheorem{example}[thm]{Example}
\newcommand{\ilim}{\mathop{\varprojlim}\limits} % inverse limit
\newcommand{\Proj}{{\P roj}}
\newcommand{\codim}{{\rm codim}}
\newcommand{\Spec}{{\rm Spec \,}}
\newcommand{\sE}{{\mathcal E}}
\newcommand{\sH}{{\mathcal H}}
\newcommand{\sO}{{\mathcal O}}
\newcommand{\sU}{{\mathcal U}}
\newcommand{\sW}{{\mathcal W}}
\newcommand{\sX}{{\mathcal X}}
\newcommand{\sY}{{\mathcal Y}}
\newcommand{\sZ}{{\mathcal Z}}
\newcommand{\A}{{\mathbb A}}
\newcommand{\G}{{\mathbb G}}
\renewcommand{\P}{{\mathbb P}}
\newcommand{\Q}{{\mathbb Q}}
\newcommand{\Z}{{\mathbb Z}}
\newcommand{\colim}{{\rm colim \,}}
\newcommand{\hocolim}{{\rm hocolim \,}}
\newcommand{\holim}{{\rm holim \,}}
\newcommand{\DM}[2]{\mathbf{DM}_{#2}^{\mathit{eff}}(#1)}
    \newcommand{\spref}[1]{\href{http://stacks.math.columbia.edu/tag/#1}{#1}}
\begin{document}

\title{On the motivic homotopy type of algebraic stacks}

\author{Neeraj Deshmukh}
\address{Department of Mathematics, Boyd Research and Education Center, University of Georgia, Athens, GA 30602, USA}
\email{neeraj.deshmukh@uga.edu}

\author{Jack Hall}
\address{School of Mathematics \& Statistics\\The University of Melbourne\\Parkville,
  VIC, 3010\\Australia}
\email{jack.hall@unimelb.edu.au}

%\subjclass[2000]{}

%\keywords{}

\date{\today}

\maketitle

\begin{abstract}
  We construct smooth presentations of algebraic stacks that are local epimorphisms in the Morel-Voevodsky $\A^1$-homotopy category. As a consequence, we show that the motive of a smooth stack (in Voevodsky's triangulated category of motives) has many of the same properties as the motive of a smooth scheme.
\end{abstract}
\section{Introduction}

An important technique in motivic homotopy theory of algebraic stacks is reduction to the scheme case by means of homotopical descent. This is possible, for instance, when the stacks in question are Nisnevich locally quotient stacks. The results in \cite{AHR19} (and further generalisation in \cite{ahhlr21}) show that stacks with linearly reductive stabilisers are Nisnevich locally quotient stacks. 

In this note, we establish a certain homotopy descent result for any
algebraic stack.  This allows us to conclude that the various
formalisms of motives and motivic homotopy theory\footnote{In the case
  of stacks one has to distinguish between \textit{genuine} vs
  \textit{Kan-extended} motivic spectra. For instance $K$-theory is
  \textit{genuine} but Chow groups are \textit{Kan-extended}. In this
  note we will be concerned with the latter kind of objects.} produce
the correct results for algebraic stacks. To wit, we will show that
the motive of a smooth algebraic stack has similar properties as the
motive of a smooth scheme. This generalises the results in
\cite{cdh20} that were established for Nisnevich locally quotient
stacks. Another improvement, albeit minor, that we can make is to show
that for algebraic stacks with separated diagonal the existing notions
of stable homotopy category coincide: in \cite{chow21}, it is shown
that the stable motivic homotopy category of \cite{chow21} and the
lisse-extended category of \cite{khan2021generalized} are equivalent
when the stack admits a smooth presentation with a Nisnevich local
section (i.e., the \emph{smooth-Nisnevich coverings} of
\cite{pirisi2018}). In Appendix \ref{App:smooth-nis-covers}, we will
show that all algebraic stacks admit such a presentation (Theorem
\ref{MT}).

%In this short note we will discuss some aspects of motivic homotopy theory of algebriac stacks. In \cite{cdh20, kr21, chow21} this topic is discussed albeit with many conditions on the algebraic stacks. In \cite{cdh20} only stacks with Nisnevich local quotient structure in the style of Alper-Hall-Rydh \cite{AHR19, ahhlr21} are considered; while in \cite{chow21} the stable movitic homotopy category is defined for algebraic stacks admitting certain special type of coverings that admit local sections; the lisse-extended theory of \cite{kr21} formalises this construction to any algebraic stack. The stable motivic homotopy category of \cite{chow21} and the lisse-extended category of \cite{kr21} are known to be equivalent when the stack admits a smooth presentation with a Nisnevich local section. 

\begin{definition}
	A \emph{smooth-Nisnevich covering} of algebraic stacks is a morphism $f:\sY\rightarrow \sX$ such that $f$ is smooth and every morphism $\Spec K \rightarrow \sX$ from the spectrum of a field $K$ lifts to $\sY$. % We say that $f$ is  \emph{smooth-Nisnevich covering} when $\sY$ is an algebraic space.
\end{definition}

When $f$ is \'{e}tale and $\sY$ and $\sX$ are algebraic spaces, these
are just the standard Nisnevich coverings.

The existence of smooth-Nisnevich coverings for algebraic stacks goes back to
\cite[\S6]{LMB} (quasi-compact and separated diagonal) and
\cite[Appendix B]{cesnavicius15} (diagonal has relatively separated
fibers). If the stack is of finite type over an infinite field with
affine stabilizers, then it was shown in \cite{pirisi2018} that there
exists a smooth-Nisnevich covering of finite type. In the following theorem, we generalize all of these results by
eliminating separation hypotheses for the existence result and
noetherian hypotheses in the boundedness result; we also prove an
analogous result for Deligne--Mumford stacks. 

\begin{theorem}\label{MT}
	If $\mathcal{X}$ is an algebraic stack, then there exists a
	smooth-Nisnevich covering $p\colon X \to \mathcal{X}$, where $X$ is
	a scheme. If $\mathcal{X}$ is quasi-compact and quasi-separated with
	affine stabilizers, then we may take $X$ to be affine. Moreover, if
	$\mathcal{X}$ is Deligne--Mumford, then we may take $p$ to be
	\'etale.
\end{theorem}

It
was brought to our attention during the preparation of our results
that the existence result was recently proved in a similar way in
\cite{chowdhury2024nonrepresentablesixfunctorformalisms}. However, the qcqs and affine stabilzer part of the theorem is new, and we expect it to have applications beyond motivic homotopy theory (see Remark \ref{remark-mathur-kresch} for one such application).

% Smooth-Nisnevich coverings were first discussed in \cite{pirisi2018}, where it is shown that such coverings exist for finite type stacks with affine stabilisers that are defined over an infinite field. The aim of this note is extend this result to all finite type stacks with affine stabilisers. %In fact, we will show that with some modification the approach of \cite{pirisi2018} extends to any Noetherian base scheme. 
% The following is our main geometric result.

% \begin{theorem}\label{theorem-main}
% 	Let $\sX$ be a quasi-separated algebraic stack with separated diagonal over a scheme $S$. 
% 	\begin{enumerate}
% 		\item There exists a smooth-Nisnevich covering $X\rightarrow \sX$ with $X$ a scheme. 
% 		\item Assume $S$ is Noetherian and that $\sX$ is of finite type over $S$ with affine stabilisers, then $X$ can also be chosen to be of finite type over $S$.
% 	\end{enumerate}
% \end{theorem}

% Note that the argument for Part (1) of the above theorem is essentially contained in \cite[\S 6.7]{LMB}. Part (2) is new in the mixed charactersitic setting and is built from a modification of Pirisi's argument \cite[Proposition 3.6]{pirisi2018} which works over an infinite field.
% %The argument presented here are essentially contained in \cite{LMB} (following \cite{knutson}) and \cite{pirisi2018}. The only difference is that we slightly modify the proof of \cite{pirisi2018} to get the proof for over any Noetherian base.  %The above theorem has the following corollary which makes several homotopical descent arguments accessible for algebraic stacks.

The existence of such covers has implications for motivic homotopy theory of algebraic stacks. More specifically, it shows that the Nisnevich homotopy type of an algebraic stack can be described by a simplicial scheme (or simplicial algebraic space). This makes many homotopical descent arguments accessible for algebraic stacks.

\begin{theorem}\label{corollary-smooth-nisnevich-presentation}
	Let $p:X\rightarrow \sX$ be a smooth-Nisnevich covering over a field $k$. Let $X_{\bullet}$ denote the \v{C}ech nerve of $p$. Then the morphism $p_{\bullet}:X_{\bullet}\rightarrow \sX$ induces an equivalence in the Morel-Voevodsky $\A^1$-homotopy category, $\sH(k)$.
\end{theorem}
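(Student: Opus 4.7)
My plan is to reduce the statement to showing that $p \colon X \to \sX$ is a local epimorphism of simplicial (pre)sheaves for the Nisnevich topology on smooth $k$-schemes, and then invoke the standard fact that the augmentation from the \v{C}ech nerve of such a local epimorphism is a weak equivalence in the Nisnevich-local (hence $\A^1$-local) model structure. Concretely, I view $\sX$ and $X$ as simplicial presheaves of spaces on $(\mathrm{Sm}/k)_{\mathrm{Nis}}$ and recall that $\sH(k)$ is obtained from $\Delta^{op}\mathrm{PSh}(\mathrm{Sm}/k)$ by localising at Nisnevich hypercovers and at $\A^1$. Under this setup, proving the theorem amounts to producing sufficiently many Nisnevich-local sections of $p$ after pullback along any test smooth $k$-scheme.

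The first step is to establish that $p$ is a local epimorphism in the Nisnevich topology. Let $U \to \sX$ be a morphism from a smooth $k$-scheme $U$, and form the pullback square $X_U := X \times_\sX U \to U$. Since smoothness, surjectivity and the lifting property on field-valued points are all stable under base change, $X_U \to U$ is a smooth, surjective morphism (of algebraic spaces, or schemes by Theorem~\ref{theorem-main}) with the property that every point $\Spec k' \to U$ lifts to $X_U$. A standard result (essentially the content of the Nisnevich topology; see e.g.\ \cite[\S 6.7]{LMB} or the usual argument via a generically finite quasi-section) then ensures that such a morphism is Nisnevich locally split: there is a Nisnevich covering $\{U_i \to U\}$ and maps $U_i \to X_U \to X$ lifting each $U_i \to U \to \sX$. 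This is precisely the statement that $p$ is a Nisnevich local epimorphism of sheaves.

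With the local epimorphism established, the standard hypercover descent machinery (Dugger–Hollander–Isaksen, or directly the formalism used by Morel–Voevodsky) implies that the \v{C}ech nerve $X_\bullet = \mathrm{cosk}_0(p)$ together with its augmentation is a Nisnevich hypercover of $\sX$, and the map $\hocolim_{\Delta^{op}} X_\bullet \to \sX$ is a weak equivalence in the Nisnevich-local simplicial model structure. Since the $\A^1$-localisation functor $L_{\A^1}$ preserves Nisnevich-local equivalences, the induced map $p_\bullet \colon X_\bullet \to \sX$ is an isomorphism in $\sH(k)$.

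The only non-routine point is the Nisnevich local splitting used in the second paragraph: one needs to know that a smooth surjective morphism of quasi-separated algebraic spaces with the field-point lifting property admits Nisnevich local sections. I expect this to be the main obstacle, but it is essentially a mild extension of the classical fact for schemes (using a quasi-section by a generically étale subscheme of relative dimension zero and then applying the implicit function theorem / étale slice to produce a Nisnevich neighbourhood), and it is precisely the feature that motivates the terminology ``smooth-Nisnevich''.
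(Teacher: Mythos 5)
Your argument is correct and rests on the same geometric input as the paper's proof: every smooth-Nisnevich covering splits Nisnevich-locally, equivalently over Henselian local rings. The paper records this as Corollary \ref{corollary-lift-hensel-local-points} --- first lift the residue field point $\Spec k \to \sX$ to $X$ via the smooth-Nisnevich property, then lift $\Spec\sO$ itself using smoothness of $X_\sO \to \Spec\sO$ together with Henselianness --- and then verifies that $p_\bullet$ is a stalk-wise weak equivalence by adapting the proof of \cite[Theorem 1.2]{cdh20}, before $\A^1$-localising. You repackage the same fact as ``$p$ is a Nisnevich local epimorphism'' and invoke the Dugger--Hollander--Isaksen hypercover machinery in place of the direct stalk computation; since a map of Nisnevich (pre)sheaves is a local epimorphism precisely when it admits sections over all Henselian local rings, the two routes are logically the same, and your version buys a cleaner high-level phrasing at the cost of having to be slightly careful that the hypercover formalism applies when the target $\sX$ is a $1$-truncated (groupoid-valued) presheaf rather than a discrete one --- which is what the paper's appeal to the identification of strict and homotopy fibre products of stacks handles. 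One small correction: your citation of \cite[\S 6.7]{LMB} for the Nisnevich local splitting is misplaced --- that reference underlies Theorem \ref{theorem-main}(1), the \emph{existence} of a smooth-Nisnevich covering --- but the \'etale-slice argument you sketch in its place is correct and is precisely the content of Corollary \ref{corollary-lift-hensel-local-points}.
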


A consequence of the above result is that the motive of a smooth algebraic stack continues to enjoy the same properties as the motive of a smooth scheme (see Section \ref{section-applications}). So far this was only known for stacks which satisfied local structure theorems in the sense of \cite{AHR19, ahhlr21} or in a different direction for motives of smooth stacks in the \'{e}tale topology. We describe various results of this kind in the text.

\begin{remark}\label{remark-mathur-kresch}
Another application of Theorem \ref{MT} is the following: In \cite{km22}, the authors use the boundedness result in Theorem \ref{MT} to apply Elkik's approximation technique to the Picard stack and study the following question of Grothendieck: when is the map
\[H^2(X,\G_m)\rightarrow \ilim H^2(X_n,\G_m)\]
injective for a proper morphism $X\rightarrow\Spec A$, where $(A,\mathfrak{m})$ is a complete Noetherian ring and $X_n:= X\times_A \Spec A/\mathfrak{m}^{n+1}$ is the $n$-th infinitesimal thickening.
\end{remark}

%We will present a proof of Theorem \ref{theorem-main} in Section \ref{section-main}, and then describe some applications to motivic cohomology in the final section.\\

\noindent\textbf{Conventions.} We work with algebraic stacks in the sense of \cite{stacks-project}; that is, without separation assumptions unless noted otherwise.\\

\noindent\textbf{Acknowledgements.} Neeraj Deshmukh would like to thank Roberto Pirisi and Tuomas Tajakka for helpful discussions. He also thanks Amit Hogadi,  Siddharth Mathur, Kestutis Cesnavicius and Andrew Kresch for their comments on this note and Utsav Choudhury for teaching him about motives with compact support.\\
The authors would like to thank the referee for their comments.\\
N.D. was supported by the Swiss National Science Foundation (SNF), project 200020\_178729 during the course of this work. He also acknowledges the support of the University of Zurich under the UZH Postdoc Grant, Verf\"{u}gung Nr. FK-22-111.\\
Jack Hall was partially supported
by the Australian Research Council DP210103397 and FT210100405.

\section{Smooth-Nisnevich covers}\label{App:smooth-nis-covers}

Nisnevich coverings for algebraic stacks were discussed in
\cite[\S3]{MR3754421}. The difference between smooth-Nisnevich and
Nisnevich coverings is that the latter morphisms are \'etale and
require a lift of the residual gerbe---not just a rational point. Like
their Nisnevich counterparts, however, smooth-Nisnevich coverings are
clearly stable under composition and base change. We begin with some
examples.
\begin{example}\label{ex:alg-sp}
	Let $f \colon X \to S$ be a smooth morphism of stacks, where $S$ is
	a quasi-separated algebraic space. Then every point $s\in |S|$ has a
	residue field $\kappa(s)$ \cite[Tag
	\spref{03JV}]{stacks-project}. Hence, $f$ is a smooth-Nisnevich
	covering if and only if for every $s \in |S|$, the morphism
	$f_s \colon X\times_S \Spec \kappa(s) \to \Spec \kappa(s)$ admits a
	section. In particular, if $X$ is also an algebraic space, then
	\'etale smooth-Nisnevich coverings are equivalent to Nisnevich
	covers in the sense of \cite[\S3]{MR3754421}. Thus, if
	$\mathcal{X}$ is a quasi-separated algebraic space, then Theorem
	\ref{MT} holds for $\mathcal{X}$ \cite[Prop.~5.7.6]{MR308104}.
\end{example}
\begin{example}\label{ex:gln-nis-sm}
	$\Spec \Z \to B\mathrm{GL}_{n,\Z}$ is a smooth-Nisnevich covering. More
	generally, if $Y \to S$ is a morphism of quasi-separated algebraic
	spaces such that $Y$ admits an $S$-action of $\mathrm{GL}_{n,S}$, then
	Theorem \ref{MT} holds for the quotient stack
	$[Y/\mathrm{GL}_{n,S}]$. Indeed, passing to a Nisnevich covering of $S$, we
	may assume that $S$ is an affine scheme. Then $Y \to [Y/\mathrm{GL}_{n,S}]$
	is a smooth-Nisnevich morphism (it is the base change of
	$\Spec \Z \to B\mathrm{GL}_n$). Passing to a Nisnevich \'etale cover of $Y$
	finishes the argument. This holds more generally for $[Y/G]$, where
	$G \to S$ is a smooth group algebraic space such that
	$\mathrm{H}^1((\Spec K)_{\mathrm{et}},G_{\Spec K})=0$ for all
	$\Spec K \to S$.
\end{example}
\begin{example}\label{ex:quasi-dm}
	If $\mathcal{X}$ is an algebraic stack with quasi-finite diagonal,
	then Theorem \ref{MT} holds for $\mathcal{X}$. We may of course
	assume that $\mathcal{X}$ is quasi-compact. By
	\cite[Thm.~4.1]{MR3754421} there exist morphisms of algebraic
	stacks $V \xrightarrow{v} \mathcal{W} \xrightarrow{w} \mathcal{X}$
	such that $V$ is an affine scheme, $v$ is finite and faithfully flat
	of finite presentation, and $w$ is a Nisnevich \'etale
	covering. Hence, we may replace $\mathcal{X}$ by $\mathcal{W}$ and
	assume that $\mathcal{X}$ admits a finite and faithfully flat cover
	of finite presentation by an affine scheme $V$. By
	\cite[Prop.~4.3(vii)]{GrossRes}, $\mathcal{X}$ has the
	resolution property and so the Totaro--Gross Theorem
	\cite[Thm.~5.4]{GrossRes} implies that
	$\mathcal{X} \simeq [X/\mathrm{GL}_n]$, where $X$ is quasi-affine. Now apply
	Example \ref{ex:gln-nis-sm}.
\end{example}
Let $f \colon X \to S$ be a morphism of algebraic stacks. A
\emph{splitting cover} for $f$ is a sequence of quasi-compact immersions
$Z_j \hookrightarrow S$ for $j=1$, $\dots$, $r$ with
$|S|=\cup_{j=1}^r |Z_j|$ such that
$f_{Z_j} \colon X\times_S Z_j \to Z_j$ admits a section for each $j$.
\begin{remark}\label{R:splitting-sequence}
	If $S$ is quasi-compact, then the existence of a splitting cover
	for $f$ is equivalent to the existence of a \emph{splitting
		sequence} for $f$; that is, there is a filtration
	$\emptyset = S_0 \subseteq S_1 \subseteq \cdots \subseteq S_t = S$
	by quasi-compact open subsets such that
	$f_{S_{i}\setminus S_{i-1}} \colon X \times_S (S_{i}\setminus
	S_{i-1}) \to S_i\setminus S_{i-1}$ admits a section for each $i=1$,
	$\dots$, $t$. The equivalence is due to the close relationship
	between partitions and filtrations in the constructible topology
	\cite[Tag \spref{09XY}]{stacks-project}.
\end{remark}
% sequence of quasi-compact open immersions
% \[
%   \emptyset = S_0 \subseteq S_1 \subseteq \cdots \subseteq S_r = S
% \]
% such that $f$ restricted to each strata $Z_i = S_i\setminus S_{i-1}$, when given the
% reduced structure,
%admits a section for each $i=1$, $\dots$, $r$.
% \begin{remark}\label{R:strata-filtrations}
%   Let $S$ be a quasi-compact and quasi-separated algebraic stack.  If
%   there is a partition $|S| = \amalg_{j=1}^t |Z_j|$, where the
%   $Z_j \hookrightarrow S$ are finitely presented immersions of
%   algebraic stacks, then there is a filtration
%   $\emptyset = S_0 \subseteq S_1 \subseteq \cdots \subseteq S_r = S$
%   and each strata $S_{i}\setminus S_{i-1}$ is contained in some $Z_j$
%   for $i=1$, $\dots$, $r$. To see this, we may write each
%   $Z_j = U_j\setminus V_j$, where $V_j \subseteq U_j \subseteq S$ are
%   quasi-compact open subsets. Let $\Sigma = \{U_j,V_j\}_{j=1}^t$ and
%   let $\bar{\Sigma}$ be the smallest collection of subsets of $|S|$
%   containing $\Sigma$ that is closed under finite intersections; then
%   $\bar{\Sigma}$ is finite and every element of
% \end{remark}
The
following lemma is a smooth analog of \cite[Prop.~3.3]{MR3754421}.
\begin{lemma}\label{L:split-covering-sm}
	Let $f \colon X \to S$ be a smooth morphism of algebraic stacks. If
	$S$ is a quasi-compact and quasi-separated algebraic space, then $f$ is a
	smooth-Nisnevich covering if and only if $f$ admits a splitting
	cover.
\end{lemma}
\begin{proof}
	Clearly, if $f$ has a splitting cover, then it is a
	smooth-Nisnevich covering. For the other direction: let $s\in |S|$
	be a point. By Example \ref{ex:alg-sp},
	$f_s \colon X\times_S \Spec \kappa(s) \to \Spec \kappa(s)$ admits a
	section. By \cite[Lem.~2.1]{MR3754421} and \cite[Prop.~B.2 \&
	B.3]{RydhApproximation}, there exists an immersion $V_s \hookrightarrow S$
	of finite presentation such that $s\in |V_s|$ and a section to
	$f_{V_s} \colon X\times_S V_s \to V_s$. Since the $V_s$ are
	constructible, $S$ is covered by finitely many and so there are
	$s_j \in |S|$, $j=1$, $\dots$, $r$ such that
	$\cup_{j=1}^{r} |V_{s_j}| = |S|$. % By passing to quasi-compact
	% immersions inside the $V_{s_j}$, we obtain a splitting partition of
	% $S$. 
	% Now argue as in \cite[Proof of
	% Prop.~4.4]{MR2774654} to produce the filtration and a splitting
	%sequence.
\end{proof}
% \begin{lemma}
%   Let $f \colon X \to S$ be a smooth morphism of algebraic stacks.
%   \begin{enumerate}
%   \item Let $S' \to S$ be a morphism of algebraic stacks. If $f$ is
%     Nisnevich smooth covering, then $f' \colon X\times_S S'\to S'$ is a Nisnevich
%     smooth covering.
%   \item Let $T$ be a scheme and $G \to T$ a smooth and special group
%     scheme over $T$ (e.g., $G=\mathrm{GL}_{N,T}$). If $X$ is an algebraic
%     space over $T$ with an action of $G$ over $T$ such that
%     $S \simeq [X/G]$, then $f$ is a Nisnevich smooth covering.
%   \end{enumerate}
% \end{lemma}
% \begin{lemma}
% \end{lemma}
The following Proposition leverages splitting coverings to obtain a boundedness result.
\begin{proposition}\label{P:boundedness}
	Consider a cartesian square of algebraic stacks:
	\[
	\xymatrix{X' \ar[r]^{x} \ar[d]_{f'} & X \ar[d]^f \\ S' \ar[r]_s &
		S.}
	\]
	Assume that
	\begin{enumerate}
		\item $f$ and $s$ are smooth-Nisnevich coverings; and
		\item $X$ is a quasi-compact and quasi-separated algebraic space.
		%  \item $S'$ is an algebraic space. 
	\end{enumerate}
	Then there exists a quasi-compact open $S'' \subseteq S'$ such that
	$S'' \to S$ is a smooth-Nisnevich covering.
\end{proposition}
\begin{proof}
	Since $X$ is a quasi-compact algebraic space and $x$ is a
	smooth-Nisnevich covering, it follows from Lemma
	\ref{L:split-covering-sm} that $x$ admits a splitting cover
	$|X| = \cup_{j=1}^r |W_j|$ with sections
	$t_j \colon W_j \to X\times_S W_j$ to
	$f_{W_j} \colon X\times_S W_j \to W_j$. % That is, there is a finite
	% filtration of $X$ by quasi-compact open subspaces
	% $\emptyset = X_0 \subseteq \cdots \subseteq X_r = X$ and sections
	% $t_i$ to $x_{Z_i} \colon X' \times_X Z_i \to Z_i$ for each $i=1$,
	% $\dots$, $r$, where $Z_i = S_i\setminus S_{i-1}$. 
	Since $x$ is quasi-separated and locally of finite presentation, the
	sections $t_j$ are of finite presentation. But the $W_j$ are
	quasi-compact, so the subsets $t_j(W_j) \subseteq X'$ are all
	constructible (Chevalley's Theorem). Write $S' = \cup_{i\in I} S_i$
	as an increasing union of quasi-compact open subsets. Since
	$t_j(W_j)$ is constructible and $f'$ is quasi-compact, for each $j$
	there is an $i_j$ such that
	$t_j(W_j) = f^{-1}(S') \cap t_j(W_j) = f^{-1}(S_{i_j}) \cap
	t_j(W_j)$; in other words, $t_j(W_j) \subseteq
	f^{-1}(S_{i_j})$. Take $v = \max\{i_j\}$ and set $S'' = S_{v}$ and
	$X'' = f'^{-1}(S'') \subseteq X$. Then the induced morphism
	$X'' \to X$ is a smooth-Nisnevich covering (Lemma
	\ref{L:split-covering-sm}). Thus, $X'' \to S$ is a smooth-Nisnevich
	covering and so $S'' \to S$ is a smooth-Nisnevich covering.
\end{proof}

% \section{ \'Etale families in a smooth morphism}
Let $f \colon X \to S$ be a morphism of algebraic stacks. Let
$\underline{\mathrm{HS}}_f \to S$ be the \emph{Hilbert stack} of $f$,
which parameterizes quasi-finite and representable morphisms to $X$
that are proper over the base. If $f$ has affine stabilizers with
quasi-compact and separated diagonal, then
$\underline{\mathrm{HS}}_f \to S$ is a morphism of algebraic stacks
with quasi-affine diagonal, which is locally of finite presentation if
$f$ is so \cite{MR3148551,MR3359029}. As noted in
\cite[Thm.~5.1]{MR3754421}, if we let
$\underline{\mathrm{HS}}_f^{\mathrm{qfb}} \subseteq \underline{\mathrm{HS}}_f$
be the open substack parameterizing those families that are
quasi-finite over $S$, then these existence results are much simpler
to prove. Let
$\underline{\mathrm{HS}}_f^{\mathrm{\acute{e}tb}} \subseteq
\underline{\mathrm{HS}}_f^{\mathrm{qfb}}$ be the substack parameterizing those
families that are \'etale over $S$. If $f$ is a morphism of algebraic spaces, then
$\underline{\mathrm{HS}}_f^{\mathrm{\acute{e}tb}} \simeq [(X/S)^d/S_d]$
\cite[Thm.~5.1]{MR2821738}, where $(X/S)^d$ denotes the $d$-fold fiber
product of $X$ over $S$ and $S_d$ acts on this product via
permutation. If $f$ is a separated morphism of algebraic spaces, then there is an
open subspace
$\underline{\mathrm{Hilb}}_f^{\mathrm{\acute{e}tb}} \subseteq
\underline{\mathrm{HS}}_f^{\mathrm{\acute{e}tb}}$ parameterizing closed immersions
into $X$ and
$\underline{\mathrm{Hilb}}_f^{\mathrm{\acute{e}tb}} \subseteq
\underline{\mathrm{HS}}_f^{\mathrm{\acute{e}tb}}$ corresponds to the open subset of
$[(X/S)^d/S_d]$ that is the complement of the diagonals
\cite[Thm.~5.1]{MR2821738}. The following result summarizes the
relevant results of \cite{MR2821738} and can be viewed as a smooth
variant of \cite[\S\S 5-6]{MR3754421}.
\begin{proposition}\label{P:hs-covering}
	Let $f \colon X \to S$ be a representable smooth covering of algebraic stacks. 
	\begin{enumerate}
		\item \label{PI:hs-covering:HS} $\underline{\mathrm{HS}}_f^{\mathrm{\acute{e}tb}} \to S$ is
		% \begin{enumerate}
		% \item \label{PI:hs-covering:rel-dm}
		relatively Deligne--Mumford with separated diagonal, and a
		smooth-Nisnevich covering.
		\begin{enumerate}
			\item \label{PI:hs-covering:HS:et} If $f$ is \'etale, then
			$\underline{\mathrm{HS}}_f^{\mathrm{\acute{e}tb}} \to S$ is \'etale.\footnote{If
				$X$ and $S$ are quasi-separated, then $f$ is even a Nisnevich
				covering \cite[Proof of Thm.~4.1]{MR3754421}.}
			\item \label{PI:hs-covering:HS:dm} If $X$ is a Deligne--Mumford
			stack (with separated diagonal), then so is
			$\underline{\mathrm{HS}}_f^{\mathrm{\acute{e}tb}}$.
		\end{enumerate}
		\item \label{PI:hs-covering:hilb} If $f$ is separated, then
		$\underline{\mathrm{Hilb}}_f^{\mathrm{\acute{e}tb}} \to S$ is representable,
		separated, and a smooth-Nisnevich covering.
		\begin{enumerate}
			\item \label{PI:hs-covering:hilb:alg} If $X$ is a (separated) algebraic space, then so is
			$\underline{\mathrm{Hilb}}_f^{\mathrm{\acute{e}tb}}$.
			\item \label{PI:hs-covering:hilb:sch} If $X$ is a (separated) scheme and $f$ is \'etale, then so
			is $\underline{\mathrm{Hilb}}_f^{\mathrm{\acute{e}tb}}$.
		\end{enumerate}
		% \item \label{PI:hs-covering:nis}
		%   $\underline{\mathrm{HS}}_f^{\mathrm{\acute{e}tb}} \to S$ is a smooth-Nisnevich
		%   covering; and
		% \item \label{PI:hs-covering:abs}
	\end{enumerate}
	%  \end{enumerate}
\end{proposition}
\begin{proof}
	The first parts of \eqref{PI:hs-covering:HS} and
	\eqref{PI:hs-covering:hilb} are smooth-local on $S$ and follow
	trivially from the explicit descriptions above when $f$ is a
	morphism of algebraic spaces. % Hence, we may
	To see that $\underline{\mathrm{HS}}_f^{\mathrm{\acute{e}tb}} \to S$ is a
	smooth-Nisnevich covering, let $s \colon \Spec K \to S$ be a
	morphism, where $K$ is a field. Since $f\colon X \to S$ is a smooth
	covering, there is a finite separable field extension
	$K \subseteq K'$ together with a lift $x \colon \Spec K' \to X$
	of $s$. That is, we have
	$\Spec K' \to X \times_S \Spec K \to \Spec K$, which corresponds
	to a lift of $x$ to $\underline{\mathrm{HS}}_f^{\mathrm{\acute{e}tb}}$. If $X \to S$
	is separated, then $\underline{\mathrm{Hilb}}_f^{\mathrm{\acute{e}tb}} \to S$ is a
	smooth-Nisnevich covering as we may replace $\Spec K'$ to be the
	residue field of the closed point in its image in
	$X\times_S \Spec K$. Next, the universal family gives us a diagram:
	\[
	\xymatrix@R-1pc{\mathcal{Z} \ar[r] \ar[dr] & X\times_S
		\underline{\mathrm{HS}}_f^{\mathrm{\acute{e}tb}} \ar[r] \ar[d] & X \ar[d] \\ &
		\underline{\mathrm{HS}}_f^{\mathrm{\acute{e}tb}} \ar[r] & S.}
	\]
	If $X$ is Deligne--Mumford (with separated diagonal), then
	$X\times_S \underline{\mathrm{HS}}_f^{\mathrm{\acute{e}tb}}$ is Deligne--Mumford
	(with separated diagonal). The map
	$\mathcal{Z} \to X\times_S \underline{\mathrm{HS}}_f^{\mathrm{\acute{e}tb}}$ is
	quasi-finite, separated and representable and so $\mathcal{Z}$ is
	Deligne--Mumford (with separated diagonal). But
	$\mathcal{Z} \to \underline{\mathrm{HS}}_f^{\mathrm{\acute{e}tb}}$ is finite \'etale
	and surjective and so $\underline{\mathrm{HS}}_f^{\mathrm{\acute{e}tb}}$ is
	Deligne--Mumford (with separated diagonal). Finally,
	\eqref{PI:hs-covering:hilb:alg} follows from
	\cite[Thm.~4.1]{MR2821738} and \eqref{PI:hs-covering:hilb:sch}
	follows from \cite[Rem.~2.3 \& Thm.~2.4]{MR2821738}.
\end{proof}
%\begin{remark}
% Proposition \ref{P:hs-covering} can be trivially generalized to the
% situation where $f$ is tame, quasi-separated, and relatively
% Deligne--Mumford with separated diagonal.
%\end{remark}
\begin{proof}[Proof of Theorem \ref{MT}]
	For the existence: let $v \colon V \to \mathcal{X}$ be a smooth cover,
	where $V$ is a scheme (take $v$ to be \'etale if $\mathcal{X}$ is
	Deligne--Mumford). Then $v$ is representable and let
	$w \colon W=\underline{\mathrm{HS}}_v^{\mathrm{\acute{e}tb}} \to \mathcal{X}$ be the
	induced morphism. Proposition
	\ref{P:hs-covering}\eqref{PI:hs-covering:HS} implies that $w$ is a
	smooth-Nisnevich covering and $W$ is a Deligne--Mumford stack with
	separated diagonal (if $\mathcal{X}$ is a Deligne--Mumford stack, $w$
	is even \'etale). Replacing $\mathcal{X}$ by $W$ we are reduced to the
	situation where $\mathcal{X}$ is Deligne--Mumford with separated
	diagonal. In this case, $v$ is separated, representable, and
	\'etale. Then Proposition
	\ref{P:hs-covering}\eqref{PI:hs-covering:hilb} implies that
	$p \colon X = \underline{\mathrm{Hilb}}^{\mathrm{\acute{e}tb}}_v \to \mathcal{X}$ is
	an \'etale smooth-Nisnevich covering and $X$ is a separated scheme.
	
	For the boundedness: by
	\cite[Prop.~2.6(i)]{MR3436239},
	$|\mathcal{X}|$ admits a finite partition
	$\amalg_{j=1}^r|\mathcal{W}_j|$, where
	$\mathcal{W}_j = [W_j/\mathrm{GL}_{n_j}]$ and $W_j$ is a quasi-affine
	scheme. For each $j$ form the following cartesian diagram:
	\[
	\xymatrix@R-1pc{W_j' \ar[r]^{p_j'} \ar[d]_{q_j'} & W_j \ar[d]^{q_j} \\
		X\times_{\mathcal{X}} \mathcal{W}_j \ar[r]_{p_j} & \mathcal{W}_j.}
	\]
	Then $p_j$ is a smooth-Nisnevich covering and so too is $q_j$
	(Example \ref{ex:gln-nis-sm}). By Proposition \ref{P:boundedness}, it
	follows that there is a quasi-compact open
	$W''_j \subseteq X\times_{\mathcal{X}} \mathcal{W}_j$, which is also a
	scheme, such that $W''_j \to \mathcal{W}_j$ is a
	smooth-Nisnevich cover. Now take a quasi-compact open
	$X'' \subseteq X$ such that
	$W''_j \subseteq X'' \times_{\mathcal{X}} \mathcal{W}_j$ for all
	$j$. Then $X'' \to \mathcal{X}$ is a smooth-Nisnevich cover, which
	proves the claim.
\end{proof}
%\noindent{\bf Acknowledgements.}

\section{Applications to Motivic homotopy theory of algebraic stack}\label{section-applications}
%We will now recall the notion of the (unstable) $\A^1$-homotopy category over a field. 
Let $k$ be a field. Let $Sm/k$ denote the category of smooth schemes over $k$. Let $\Delta^{op}PSh(Sm/k)$ be the category of simplicial presheaves on $Sm/k$. Then $\Delta^{op}PSh(Sm/k)$ has a local model structure with respect to the Nisnevich topology (see \cite{Jar}). A morphism $f:X\rightarrow Y$ in $\Delta^{op}PSh(Sm/k)$ is a weak equivalence if the induced morphisms on stalks (for the Nisnevich topology) are weak equivalences of simplicial sets. Cofibrations are monomorphisms, and fibrations are characterised by the right lifting property. 

We Bousfield localise this model structure with respect to the class of maps $X\times \A^1\rightarrow X$ (see \cite[3.2]{MV}). The resulting model structure is called the Nisnevich motivic model structure. Denote by $\mathcal{H}(k)$ the resulting homotopy category. This is the (unstable) $\A^1$-homotopy category for smooth schemes over $k$.

Let us also rapidly recall the definition of Voevodsky's triangulated category of motives $\DM{k,\Z}{}$. Let $Cor_k$ denote the category of finite correspondences whose objects are smooth separated schemes over $k$. For any two $X,Y$, the morphisms of $Cor_k$ are given by $Cor(X,Y)$ which is the free abelian group generated by irreducible closed subschemes $W\subset X\times Y$ that are finite and surjective over $X$. An additive functor $F:Cor_k^{op}\rightarrow \mathbf{Ab}$ is called a presheaf with transfers. Let $PST(k,\Z)$ denote the category of presheaves with transfers. For any smooth scheme $X$, let $\Z_{tr}(X)$ be the presheaf with tranfers which on any smooth scheme $Y$ is defined as
\[\Z_{tr}(X)(Y):=Cor(Y,X)\]
Let $K(PST(k,\Z))$ denote the category of complexes of presheaves with transfers. The category $K(PST(k,\Z))$ also has a Nisnevich motivic model structure which is defined analogously as in the case of $\Delta^{op}PSh(Sm/k)$. We denote the associated homotopy category by $\DM{k,\Z}{}$. This is Voevodsky's triangulated category of mixed motives in the Nisnevich topology (for details, see \cite{MWV}).

We now have the following trivial observation.
\begin{lemma}\label{lemma-lift-hensel-local-points}
  Let $p \colon X\rightarrow\sX$ be a smooth-Nisnevich covering of algebraic stacks. If
  $\Spec \sO$ is the spectrum of a Henselian local ring $\sO$, then
  every morphism $\Spec \sO\rightarrow \sX$ factors through $p$.
\end{lemma}
\begin{proof}
	Let $K$ be the residue field of $\sO$ and $X_\sO:= X\times_\sX \Spec \sO$ denote the base change of $X$ to $\Spec \sO$. Then the composite $\Spec K \rightarrow \Spec \sO \rightarrow \sX$ lifts to a morphism $\Spec K \rightarrow X$. This also gives us a morphism $\Spec K \rightarrow X_\sO$. Now, as $X_\sO \rightarrow \Spec \sO$ is smooth, we have a surjection of sets $X_\sO(\sO)\twoheadrightarrow X_\sO(K)$. Thus, we have a section $\Spec \sO \rightarrow X_\sO$. Composing with the projection map $X_\sO \rightarrow X$ gives us the desired lift.
\end{proof}

\begin{proposition}
	Let $\sX$ be an algebraic stack that is locally of finite type over a field $k$. Let $X\rightarrow \sX$ be a smooth-Nisnevich covering. Then the associated \v{C}ech hypercover $X_{\bullet}$ is weakly equivalent to $\sX$ in the Nisnevich local model structure on the category $\Delta^{op}PSh (Sm/k)$ of simplicial presheaves.
\end{proposition}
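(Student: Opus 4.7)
The strategy is to reduce the claim to a pointwise check on Nisnevich stalks. In the Nisnevich-local model structure on $\Delta^{op}PSh(Sm/k)$, weak equivalences are detected by evaluation at spectra of Henselizations $\sO$ of local rings of smooth $k$-schemes. Viewing $\sX$ as a simplicial presheaf by applying the nerve to its groupoid-valued functor of points, it therefore suffices to show that for every such Henselian local ring $\sO$, the augmentation $X_\bullet(\Spec \sO) \to \sX(\Spec \sO)$ is a weak equivalence of simplicial sets.

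The first step is to apply Corollary \ref{corollary-lift-hensel-local-points}: because $p : X \to \sX$ is a smooth-Nisnevich covering, every morphism $\Spec \sO \to \sX$ lifts to $X$. Hence the functor of groupoids $X(\sO) \to \sX(\sO)$ is essentially surjective and admits a section on objects. Since $p$ is representable and separated, each iterated fibre product $X_n = X \times_\sX \cdots \times_\sX X$ is an algebraic space whose $\sO$-points agree with the strict $(2$-$)$fibre product of groupoids, so $X_\bullet(\Spec \sO)$ is the nerve of the \v{C}ech groupoid of $X(\sO) \to \sX(\sO)$. An essentially surjective functor of groupoids induces a weak equivalence from its \v{C}ech nerve to the target (a section yields an explicit contracting simplicial homotopy), completing the stalkwise check.

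Globally, the same computation can be rephrased as saying that Corollary \ref{corollary-lift-hensel-local-points} exhibits $p$ as a Nisnevich-local epimorphism of simplicial presheaves, so that $X_\bullet$ is a Nisnevich hypercover of $\sX$; hypercovers are weak equivalences in the Nisnevich-local model structure, which is precisely the desired conclusion. I expect the main subtlety to be the $2$-categorical bookkeeping: one has to verify that, on $\sO$-points, the strict fibre products on the stack side really compute the homotopy fibre products of the associated simplicial sets, and this is exactly where the representability of $p$ together with the Henselian lifting property from Corollary \ref{corollary-lift-hensel-local-points} are used.
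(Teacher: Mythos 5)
Your argument is essentially the paper's: you reduce the claim to stalkwise checks over Henselian local rings, deduce essential surjectivity of $X(\sO)\to\sX(\sO)$ from Corollary~\ref{corollary-lift-hensel-local-points}, and use the fact that stacky ($2$-)fibre products model homotopy fibre products of the associated simplicial presheaves, so that $X_\bullet(\Spec\sO)$ is the nerve of a pullback groupoid mapping fully faithfully and essentially surjectively to $\sX(\sO)$. The paper compresses this into ``adapting the proof of \cite[Theorem 1.2]{cdh20}'', and your proof is precisely that adaptation spelled out.
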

\begin{proof}
	By Lemma \ref{lemma-lift-hensel-local-points}, we know that hensel local points lift along smooth-Nisnevich coverings. Adapting the proof of \cite[Theorem 1.2]{cdh20} gives the result.
\end{proof}

The proof of Theorem \ref{corollary-smooth-nisnevich-presentation} follows from $\A^1$-localising the above proposition.

\begin{proof}[Proof of Theorem \ref{corollary-smooth-nisnevich-presentation}]
	As $\A^1$-localisation preserves simplicial equivalences, the result follows from the previous proposition.
\end{proof}

As a consequence, we have the following result.

\begin{corollary}
	The weak equivalence above induces an equivalence of motives $M(X_{\bullet})\simeq M(\sX)$ in $\DM{k,\Z}{}$.
\end{corollary}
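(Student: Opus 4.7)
The plan is to apply the motive functor to the equivalence $p_\bullet \colon X_\bullet \to \sX$ of Corollary~\ref{corollary-smooth-nisnevich-presentation}. Recall that the motive functor $M \colon \Delta^{op}PSh(Sm/k) \to \DM{k,\Z}{}$ is defined by composing the levelwise linearization-with-transfers $\Z_{tr}$ with the Bousfield localization that produces $\DM{k,\Z}{}$; in particular $M$ sends Nisnevich-local $\A^1$-equivalences to isomorphisms. Since Corollary~\ref{corollary-smooth-nisnevich-presentation} asserts that $p_\bullet$ is such an equivalence in $\sH(k)$, the image $M(p_\bullet)$ should be an isomorphism once both sides are interpreted as values of this composite at simplicial presheaves.

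The one point requiring clarification is the meaning of $M(\sX)$ for a non-representable $\sX$. Following the Kan-extended convention singled out in the footnote of the introduction, I would take $M(\sX)$ to be the image of the functor of points $\sX|_{Sm/k}$ under $M$. With this convention $p_\bullet$ is a morphism of simplicial presheaves on $Sm/k$, and it remains only to check that the levelwise application of $\Z_{tr}$ preserves Nisnevich-stalkwise equivalences. This in turn follows from the Dold--Kan correspondence together with the fact that $\Z_{tr}$ on representables is a left adjoint, hence commutes with the filtered colimits defining Nisnevich stalks; the subsequent $\A^1$-Bousfield localization automatically retains any equivalence that is already present at the Nisnevich-local level.

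The main—indeed the only—obstacle is the compatibility of the Kan-extended definition of $M(\sX)$ with the \v{C}ech-nerve computation on the $Sm/k$-site. But this compatibility is built into the construction of $M$ as a colimit-preserving left Kan extension, so once the Kan-extended convention is fixed no further input beyond Corollary~\ref{corollary-smooth-nisnevich-presentation} is required, and the proof is a direct transcription of the scheme and quotient-stack cases treated in \cite{cdh20}.
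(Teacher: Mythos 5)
Your overall strategy---apply the motive functor $M\colon\sH_\bullet(k)\to\DM{k,\Z}{}$ to the equivalence $p_\bullet\colon X_\bullet\to\sX$ of Corollary \ref{corollary-smooth-nisnevich-presentation}---is exactly the paper's one-line proof, and your conclusion is correct. However, the justification in your middle paragraph does not hold up: you reduce to the claim that ``levelwise $\Z_{tr}$ preserves Nisnevich-stalkwise equivalences'' and argue this from Dold--Kan together with $\Z_{tr}$ commuting with the filtered colimits that define stalks. Commuting with stalks is not enough here, because $\Z_{tr}(X)$ is built from \emph{finite correspondences}, not from maps; the fact that $\Z_{tr}$ sends Nisnevich hypercover augmentations to Nisnevich-local quasi-isomorphisms is Voevodsky's descent theorem for presheaves with transfers, a substantive theorem rather than a formal consequence of preserving filtered colimits.

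Fortunately this detour is unnecessary. The functor $M$ is the total \emph{derived} functor of the left Quillen functor $N\Z_{tr}$, so it is by construction a functor between homotopy categories: one first cofibrantly replaces and then applies $N\Z_{tr}$, and Ken Brown's lemma guarantees that weak equivalences become isomorphisms. Thus once Corollary \ref{corollary-smooth-nisnevich-presentation} records that $p_\bullet$ is an $\A^1$-Nisnevich weak equivalence, $M(X_\bullet)\simeq M(\sX)$ follows immediately, with no need to analyze $\Z_{tr}$ on stalks. Your Kan-extended convention $M(\sX):=M(\sX|_{Sm/k})$ agrees with the paper's (and \cite{cdh20}'s) nerve-of-the-stack convention, so that part is in order.
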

\begin{proof}
	Use the functor $M: \sH_{\bullet}(k)\rightarrow \DM{k,\Z}{}$ (see \cite{cdh20} for details).
\end{proof}

Combining these results with Theorem \ref{MT}, we obtain motivic
results for smooth algebraic stacks.  The proofs are exactly as in
\cite{cdh20} after replacing $\mathrm{GL}_n$-presentations with
smooth-Nisnevich presentations.
\begin{theorem}
	Let $\sX$ be an algebraic stack that is smooth over a field $k$. Then its motive $M(\sX)\in \DM{k,\Z}{}$ satisfies the following properties:
	\begin{enumerate}
		\item $M(\sX)$ satisfies Nisnevich descent.
		\item (Projective bundle formula) For any vector bundle $\sE$ of rank $n+1$ over $\sX$, we have a canonical isomorphism
		\[M(\Proj(\sE))\simeq \bigoplus_{i=0}^n M(\sX)(i)[2i]\]
		\item (Blow-up formula) For $\sZ \subset \sX$ a smooth closed substack of pure codimension $c$ we have
		\[M(Bl_{\sZ}(\sX))\simeq M(\sX)\oplus_{i=0}^{c-1} M(\sZ)(i)[2i]\]
		\item (Gysin triangle) For $\sZ \subset \sX$ a smooth closed substack of codimension c, we have a Gysin triangle:
		\[M(\sX \setminus \sZ)\rightarrow M(\sX)\rightarrow M(\sZ)(c)[2c] \rightarrow M(\sX \setminus \sZ)[1].\]
	\end{enumerate}
\end{theorem}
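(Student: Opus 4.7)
The plan is to transfer each of the four properties from the scheme case to the stack case using the equivalence $M(\sX) \simeq M(X_\bullet)$ established by the preceding corollary, where $X_\bullet$ is the \v{C}ech nerve of a smooth-Nisnevich covering $X \to \sX$ supplied by Theorem \ref{theorem-main}. Since $\sX$ is smooth and $X \to \sX$ is smooth and representable, each term $X_n = X \times_\sX \cdots \times_\sX X$ is a smooth algebraic space over $k$; after a further Nisnevich refinement by schemes (\cite{knutson}) we may regard $X_\bullet$ as a simplicial smooth scheme, for which the four properties are already known in $\DM{k,\Z}{}$ by \cite{MWV}.

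For (1), Nisnevich descent is essentially immediate from Corollary \ref{corollary-smooth-nisnevich-presentation}: a Nisnevich hypercover of $\sX$ pulls back to a Nisnevich hypercover of each $X_n$, and combining Nisnevich descent at the level of smooth schemes with the equivalence $M(\sX)\simeq M(X_\bullet)$ yields descent for $\sX$. For (2)--(4) the argument is uniform. Pulling the geometric construction back along $p:X_\bullet \to \sX$ yields simplicial smooth schemes $\Proj(p^\ast \sE)$, $Bl_{p^\ast \sZ}(X_\bullet)$, $p^\ast(\sX \setminus \sZ)$, and $p^\ast \sZ$, because projectivization, blow-up along a smooth center, open complement, and closed immersion all commute with smooth base change. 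Applying the corresponding scheme-level projective bundle formula, blow-up formula, or Gysin triangle from \cite{MWV} in each simplicial degree produces a compatible simplicial system of decompositions (respectively distinguished triangles) in $\DM{k,\Z}{}$. Taking homotopy colimits, and using the corollary to identify the resulting motives of simplicial schemes with $M(\sX)$, $M(\Proj(\sE))$, $M(Bl_\sZ \sX)$, $M(\sX \setminus \sZ)$, $M(\sZ)$ (applied to the smooth-Nisnevich coverings induced by pullback of $p$), gives the desired identities for $\sX$.

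The main technical point is checking that the levelwise decompositions assemble coherently under the face and degeneracy maps of $X_\bullet$; this reduces in each case to the functoriality of the relevant decomposition under smooth pullback, which for the projective bundle and blow-up formulas is standard, and for the Gysin triangle follows from the base-change compatibility of the deformation-to-the-normal-cone construction. This is exactly the schema carried out in \cite{cdh20} in the special case of $GL_n$-quotient presentations, so the argument transports to our setting essentially verbatim once Theorem \ref{theorem-main} is substituted for the $GL_n$-presentation as the input.
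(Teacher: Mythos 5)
Your proposal is correct and takes essentially the same approach as the paper: the paper's proof is a one-line citation to \cite[\S 4]{cdh20} with the remark that those arguments "go through using a smooth-Nisnevich covering $X\to\sX$," and your write-up simply unpacks what that citation does, namely pull the relevant construction back along the \v{C}ech nerve of the smooth-Nisnevich presentation, apply the scheme-level results of \cite{MWV} levelwise, and descend via the equivalence $M(\sX)\simeq M(X_\bullet)$ from Corollary \ref{corollary-smooth-nisnevich-presentation}.
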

\begin{proof}
	The proofs in \cite[\S 4]{cdh20} go through using a smooth-Nisnevich covering $X\rightarrow\sX$.
\end{proof}

\begin{remark}
	Note that the transfer functor $M: \sH_{\bullet}(k)\rightarrow \DM{k,\Z}{}$ allows us to define the motive of \textit{any} smooth stack. What is apriori unclear is whether this motive has any properties (good or bad). This is why in \cite{cdh20}, the authors work with stacks that are Nisnevich locally quotient stacks in order to use homotopical descent to prove the above formulae for the motive. With Theorem \ref{corollary-smooth-nisnevich-presentation}, those arguments now work for all smooth stacks.
\end{remark}

%by stating two more results about the motivic cohomology of smooth stacks. The first one states that the motivic cohomology can be computed as hypercohomology on the lisse-Nisnevich site of the stack; while the second one is a comparison theorem between the motivic cohomology and \'{e}tale cohomlogy with finite coefficients.

%\section{Motivic cohomology}
\subsection{Motivic cohomology as hypercohomology}
We briefly recall the definitions of smooth-Nisnevich and
smooth-Zariski sites of an algebraic stack.
\begin{definition}
	Let $\sX$ be an algebraic stack. The smooth-Nisnevich (resp. smooth-Zariski) site of $\sX$, denoted by $\sX_{\text{lis-nis}}$ (resp.\ $\sX_{\text{lis-zar}}$) is the category whose objects consist of pairs $(U,p)$ where $p$ is a smooth morphism $p:U\rightarrow \sX$  from an algebraic space $U$ and coverings in $\sX_{\text{lis-nis}}$ are given by Nisnevich coverings (resp. Zariski coverings) of algebraic spaces.
\end{definition}

The following result states that motivic cohomology can be computed as
hypercohomology of the motivic complexes $\Z(i)$ on the
smooth-Nisnevich or smooth-Zariski (in the Deligne--Mumford case) site
of the stack.

\begin{proposition}\label{proposition-hypercohomology}
	Let $\sX$ be an algebraic stack that is smooth over a field $k$. The motivic cohomology of $\sX$ agrees with the hypercohomology of the motivic complexes $\Z(j)$ on $\sX_{\text{lis-nis}}$. That is,
	\[Ext^i_{D(\sX)}(\Z,\Z(j)|_\sX)\simeq
          Ext^i_{DA(\sX)}(\Z,\Z(j)|_\sX)\simeq
          Hom_{\DM{k,\Z}{}}(M(\sX),\Z(j)[i]),\] where $\Z$ denotes the
        constant sheaf $\Z$ on the $\sX_{\text{lis-nis}}$. In
        addition, if $\sX$ is separated and Deligne--Mumford with
        schematic coarse space, then the motivic cohomology of $\sX$
        agrees with the hypercohomology of the motivic complexes
        $\Z(j)$ on $\sX_{\text{lis-zar}}$.
\end{proposition}
\begin{proof}
  See \cite[Theorem 5.2]{cdh20} for the smooth-Nisnevich case. In the
  Deligne--Mumford case, it follows from \cite{KreschGeometry}, where it is shown that such stacks are Zariski-locally quotient stacks.
\end{proof}

% In fact, for smooth separated Deligne-Mumford stacks with schematic coarse space over a field $k$, we can also use the smooth-Zariski site for the above computation. This follows from the following obvious corollary of \cite{KreschVistoli}:

% \begin{corollary}
% 	Let $\sX$ be a separated Deligne-Mumford stack that is smooth over a field $k$. If $\sX$ has   schematic coarse space, then $\sX$ is Zariski locally a quotient stack.
% \end{corollary}
% \begin{proof}
% 	By \cite{KreschVistoli}, it is immediate that $\sX$ admits finite flat covers Zarski locally on the coarse space. As resolution property descends along finite flat maps, we are done.
% \end{proof}

% The above propostion show that for any such stack one can find a $GL_n$-presentation which is a \emph{Zariski local} equivalence.

% We note the following refinement of Corollary \ref{theorem-hypercohomology} which is true for smooth separated Deligne-Mumford stacks with schematic coarse space. The proof is that same using a Zariki local presentation (instead of a Nisnevich local presentation).

% \begin{corollary}%\label{theorem-hypercohomology}
% 	Let $\sX$ be a smooth separated Deligne Mumford stack over a field $k$. Assume that the coarse moduli space of $\sX$ is a scheme. The motivic cohomology of $\sX$ agrees with the hypercohomology of the motivic complexes $\Z(j)$ on $\sX_{\text{lis-zar}}$. That is,
% 	\[Ext^i_{D(\sX)}(\Z,\Z(j)|_\sX)\simeq Ext^i_{DA(\sX)}(\Z,\Z(j)|_\sX)\simeq Hom_{\DM{k,\Z}{}}(M(\sX),\Z(j)[i]),\]
% 	where $\Z$ denotes the constant sheaf $\Z$ on the $\sX_{\text{lis-nis}}$.
% \end{corollary}

\subsection{Motivic cohomology with finite coefficients} We also have the following comparison theorem relating motivic cohomology with $\Z/n\Z$ to \'{e}tale cohomology with $\mu_n$-coefficients.
\begin{corollary}\label{corollary-bl}
	Let $\sX$ be an algebraic stack that is smooth over a field $k$. Then the homomorphisms
	\[H^{p,q}_M (\sX,\Z/n\Z)\rightarrow H^p_{\acute{e}t}(\sX,\mu_n^{\otimes q}),\]
	are isomorphisms for $p\leq q$ and monomorphisms for $p=q+1$.
\end{corollary}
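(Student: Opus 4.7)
The plan is to bootstrap the Beilinson--Lichtenbaum theorem for smooth schemes (Voevodsky's norm residue isomorphism theorem) to smooth algebraic stacks using the smooth-Nisnevich descent result of Theorem \ref{corollary-smooth-nisnevich-presentation}. By Theorem \ref{theorem-main}(1), I would pick a smooth-Nisnevich covering $p:X\to\sX$ with $X$ a smooth scheme. The \v{C}ech nerve $X_\bullet$ then consists of smooth schemes over $k$, because $X$ is smooth over $\sX$ and each $X_m$ is iterated fibre product of smooth morphisms over $\sX$. Theorem \ref{corollary-smooth-nisnevich-presentation} ensures that $X_\bullet \to \sX$ is an equivalence in $\sH(k)$, and in particular $M(X_\bullet)\simeq M(\sX)$ in $\DM{k,\Z}{}$ by the corollary just established.

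Both sides of the comparison map are Nisnevich-local invariants on $Sm/k$: motivic cohomology with $\Z/n\Z$-coefficients is represented in $\DM{k,\Z}{}$ by $\Z/n(q)[p]$, while $H^p_{\acute{e}t}(-,\mu_n^{\otimes q})$ is computed in $\sH(k)$ by the truncation $\tau_{\leq q}R\epsilon_*\mu_n^{\otimes q}$, where $\epsilon:(Sm/k)_{\acute{e}t}\to (Sm/k)_{\mathrm{Nis}}$ is the change-of-topology morphism. Both theories therefore satisfy Nisnevich hyperdescent. Applying descent to $X_\bullet\to\sX$ I obtain two strongly convergent spectral sequences whose $E_1$-pages compute the relevant cohomologies of each $X_m$ and whose abutments compute them for $\sX$. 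At each simplicial level $m$, $X_m$ is a smooth $k$-scheme, so the classical Beilinson--Lichtenbaum theorem gives that
\[H^{p,q}_M(X_m,\Z/n\Z)\longrightarrow H^p_{\acute{e}t}(X_m,\mu_n^{\otimes q})\]
is an isomorphism for $p\leq q$ and injective for $p=q+1$.

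A levelwise comparison of the two spectral sequences transports these vanishing and injectivity ranges to the abutment, proving the corollary. This is exactly the template in \cite[\S 5]{cdh20}, now applicable without the Nisnevich-locally-quotient-stack hypothesis thanks to Theorem \ref{corollary-smooth-nisnevich-presentation}. The main obstacle I anticipate is identifying the étale cohomology of $\sX$ in the statement (defined via the lisse-étale site of the stack) with the cohomology computed by descent from $X_\bullet$. This reduces to the fact that smooth surjective morphisms are of universal cohomological descent in the étale topology, which is standard (see \cite[\S 6]{LMB}); once this identification is granted, the rest of the argument is a formal consequence of levelwise Beilinson--Lichtenbaum and the simplicial comparison.
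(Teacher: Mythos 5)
Your approach is the one the paper intends: the proof in the paper is simply a reference to \cite[Corollary 5.3]{cdh20}, with the $GL_n$-presentation used there replaced by the smooth-Nisnevich covering from Theorem \ref{theorem-main}, and your reduction to levelwise Beilinson--Lichtenbaum along the \v{C}ech nerve $X_\bullet$ is exactly that argument.

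Two small imprecisions in the writeup are worth noting, since as stated the final step is not quite airtight. First, $H^p_{\acute{e}t}(-,\mu_n^{\otimes q})$ is computed Nisnevich-locally by the \emph{full} pushforward $R\epsilon_*\mu_n^{\otimes q}$, not by its truncation; the truncation $\tau_{\leq q}R\epsilon_*\mu_n^{\otimes q}$ is what Beilinson--Lichtenbaum identifies with the motivic complex $\Z/n(q)$. Second, a map of descent spectral sequences that is an isomorphism on the $E_1$-page for $t\leq q$ and injective for $t=q+1$ does not by itself propagate those ranges to the abutment, since differentials $d_r\colon E_r^{s-r,\,t+r-1}\to E_r^{s,t}$ can originate from entries with $t$-coordinate $>q+1$ where nothing is known. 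The correct way to close the argument, which is what \cite{cdh20} does and which you essentially set up, is to use the Beilinson--Lichtenbaum quasi-isomorphism $\Z/n(q)\simeq \tau_{\leq q}R\epsilon_*\mu_n^{\otimes q}$ of complexes of Nisnevich sheaves on $Sm/k$: hyperdescent along $X_\bullet$ then gives a genuine isomorphism $H^{p,q}_M(\sX,\Z/n\Z)\cong \HH^p_{\mathrm{Nis}}(\sX,\tau_{\leq q}R\epsilon_*\mu_n^{\otimes q})$ in all degrees, and the remaining comparison with $H^p_{\acute{e}t}(\sX,\mu_n^{\otimes q})=\HH^p_{\mathrm{Nis}}(\sX,R\epsilon_*\mu_n^{\otimes q})$ follows from the truncation triangle $\tau_{\leq q}\to \mathrm{id}\to \tau_{>q}$ and the vanishing of $\HH^p(\sX,\tau_{>q}R\epsilon_*\mu_n^{\otimes q})$ for $p\leq q$, which yields precisely the asserted isomorphism/monomorphism ranges.
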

\begin{proof}
	See \cite[Corollary 5.3]{cdh20}.
\end{proof}

\subsection{Motives with compact support}

In this subsection, we will define the notion of a motive with compact support for smooth algebraic stacks with affine stabilizers over a field $k$. The definition is very similar to Totaro's construction for quotient stacks using approximation by vector bundles \cite{TotaroChow}, except that since we cannot approximate non-quotient stacks by algebraic spaces, we are forced to work with homotopy limits. We will use use the definition of dimension for algebraic stacks as in \cite[Tag \spref{0AFL}]{stacks-project}

\begin{definition}
	Let $\sX$ be a connected algebraic stack of finite type over a field $k$ with affine stabilizers. Consider a smooth-Nisnevich covering $p:X\rightarrow\sX$, where $X$ is of finite type over $k$ (Theorem \ref{MT}) and let $n$ denote the relative dimension of $p: X\rightarrow\sX$. Let  $p_{\bullet}:X_{\bullet}\rightarrow \sX$ be the \v{C}ech nerve of $p$. Then we define the \textit{compactly supported motive} of $\sX$ as
	\[M^c(\sX):= \holim\!_i M^c(X_i)(-(i+1)n^2)[-2n^2i].\]
	%Here the limit is taken over the category of hypercovers of $\sX$.
\end{definition}

This definition is a bit weird looking, but it has the following pleasant feature, which also shows that the definition is independent of choices for smooth algebraic stacks.

\begin{theorem}[Poincar\'{e} Duality]
	Let $\sX$ be a smooth algebraic stack of dimension $d$. Then we have an isomorphism, 
	\[M^c(\sX) = M(\sX)^\vee(d)[2d].\]
\end{theorem}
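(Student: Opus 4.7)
The plan is to deduce the statement from classical Poincar\'{e} duality for smooth schemes applied term-wise to a Čech nerve, combined with the simplicial presentation of $\sX$ from Corollary \ref{corollary-smooth-nisnevich-presentation}. Fix a smooth-Nisnevich covering $p\colon X \rightarrow \sX$ provided by Theorem \ref{theorem-main}, and let $X_{\bullet}$ be its Čech nerve. After passing to a connected component we may assume $p$ has pure relative dimension $n$, so that each $X_i = X \times_{\sX} \cdots \times_{\sX} X$ (with $i+1$ factors) is a smooth algebraic space of dimension $d + (i+1)n$; by taking a further Nisnevich cover by a scheme (Knutson), we may assume the $X_i$ are schemes and apply Poincar\'{e} duality in $\DM{k,\Z}{}$ directly.

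The first step is term-wise duality. For each $i$, classical Poincar\'{e} duality gives a natural isomorphism
\[ M^c(X_i) \simeq M(X_i)^{\vee}\bigl(d + (i+1)n\bigr)\bigl[2d + 2(i+1)n\bigr]. \]
The Tate twists and shifts in the definition of $M^c(\sX)$ are calibrated so that, after twisting, the $i$-dependent parts of the dimension and homological shift cancel, yielding a natural isomorphism of simplicial diagrams
\[ M^c(X_i)(-(i+1)n^2)[-2n^2 i] \simeq M(X_i)^{\vee}(d)[2d]. \]

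The second step is to commute $\holim$ with the dual. In any stable enhancement of $\DM{k,\Z}{}$, the contravariant functor $\underline{\Hom}(-,\Z)$ is the right adjoint to the tensor product and therefore converts $\hocolim$ in its argument into $\holim$; moreover the Tate object $\Z(d)[2d]$ is invertible and commutes with both. Hence
\[ \holim_i M(X_i)^{\vee}(d)[2d] \simeq \bigl(\hocolim_i M(X_i)\bigr)^{\vee}(d)[2d]. \]
Applying the motive functor $M$ to the $\A^1$-equivalence of Corollary \ref{corollary-smooth-nisnevich-presentation} gives $\hocolim_i M(X_i) \simeq M(\sX)$, and concatenating the three equivalences yields the desired duality $M^c(\sX) \simeq M(\sX)^{\vee}(d)[2d]$.

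The main obstacle is homotopical bookkeeping rather than any new geometric input: since $\DM{k,\Z}{}$ is only triangulated, the $\holim$ appearing in the definition of $M^c(\sX)$ must be interpreted in a model- or $\infty$-categorical enhancement (as in the setup of \cite{cdh20}), and one must check that the term-wise instances of Poincar\'{e} duality assemble into an isomorphism of simplicial diagrams rather than merely a collection of level-wise isomorphisms. A secondary point, independence of $M^c(\sX)$ from the choice of smooth-Nisnevich cover $p$, comes for free from the argument, as the right-hand side $M(\sX)^{\vee}(d)[2d]$ does not involve $p$ at all.
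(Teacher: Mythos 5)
Your proof takes the same two-step approach as the paper: apply Poincar\'{e} duality term-wise to the smooth $X_i$ so that the $i$-dependent twists and shifts cancel, then use the fact that dualization converts $\hocolim_i M(X_i)\simeq M(\sX)$ into $\holim_i M(X_i)^\vee\simeq M(\sX)^\vee$. One caveat you assert rather than verify: with $\dim X_i = d + (i+1)n$ as you correctly compute, the twist $(-(i+1)n^2)$ and shift $[-2n^2 i]$ in the stated definition of $M^c$ do not actually cancel the $i$-dependence (the twist uses $i+1$ but the shift uses $i$, and $n^2$ should match the relative dimension); this is a typo in the definition of $M^c$ that both you and the paper's proof silently normalize away, not a flaw in your strategy.
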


\begin{proof}
	The proof is a straightforward manipulation of the definition. Tensoring by the dualizable object $\Z(-d)[-2d]$ we get:
	\begin{align*}
	M^c(\sX)\otimes \Z(-d)[-2d] &= (\holim\!_i M^c(X_i)(-(i+1)n^2)[-2n^2i])\otimes \Z(-d)[-2d]\\
	&= \holim\!_i M^c(X_i)(-(i+1)n^2-d)[-2n^2i-2d]\\
	&= \holim\!_i M(X_i)^\vee.
	\end{align*}
	where the last step follows from the fact that each $X_i$ is smooth. Further, since $M(\sX)\simeq \hocolim\!_iM(X_i)$, taking dual we get that $M(\sX)^\vee=\holim\!_i M(X_i)^\vee$. Thus, we have proved that
	\[M^c(\sX)(-d)[2d]\simeq M(\sX)^\vee.\]
	Tensoring by $\otimes \Z(d)[2d]$, gives the required expression.
\end{proof}

\noindent In the following subsection, we note an application to stable motivic homotopy theory of stacks. We will not explain any details to prevent drowning the reader in $\infty$-categories. Our intention is to simply indicate how the geometric content of Theorem \ref{MT} can be applied in the world of motivic homotopy theory. The interested reader may consult \cite{chow21} or \cite{khan2021generalized} for further details.
\subsection{The stable motivic homotopy category of a stack}
Recently, two different definitions of the stable motivic homotopy category for stacks have appeared in literature. The limit-extended category $SH_\lhd(\sX)$ in \cite{khan2021generalized} and the category $SH_{ext}^{\otimes}(\sX)$ in \cite{chow21} that is extended from schemes to stacks having Nisnevich local sections. In \cite{chow21} it is proved that the two definition are equivalent
whenever the stack admits a smooth-Nisnevich covering. As coverings such always exist for algebraic stacks by Theorem \ref{MT}, we have a strengthening of \cite[Corollary 2.5.4]{chow21}.

\begin{definition}
	Let $\sX$ be an algebraic stack. A smooth presentation $X\rightarrow\sX$ is said to admit \textit{Nisnevich local sections} if, for any scheme $Y$ and a morphism $Y\rightarrow\sX$, there exists a Nisnevich covering $Y'\rightarrow Y$ with a section $Y'\rightarrow Y\times_\sX X$. Thus, we have the following commutative diagram,
	\begin{center}
		\begin{tikzcd}
			& Y\times_\sX X\arrow[d]\arrow[r] & X\arrow[d]\\
			Y' \arrow[ur, dotted]\arrow[r] & Y\arrow[r] & \sX
		\end{tikzcd}
	\end{center}
\end{definition}

The following lemma shows that the above definition is equivalent to the notion of a smooth-Nisnevich covering.

\begin{lemma}
	Let $\sX$ be an algebraic stack. A smooth presentation $f: X\rightarrow \sX$ is admits Nisnevich local sections if and only if it is a smooth-Nisnevich covering.
\end{lemma}
\begin{proof}
	The only if part is clear. To see the if part, observe that if $X\rightarrow\sX$ is a smooth-Nisnevich covering, then by Cor 2.7, for any Henselian local ring $\sO$ with a map $\Spec \sO\rightarrow \sX$ we have a lift $\Spec\sO \rightarrow X$. Thus, if $V\rightarrow\sX$ is a morphism and $\sO_{v,V}$ is the Henselian local ring at a point $v\in V$, we have a lift $\Spec\sO_{v,V}\rightarrow V\times_\sX X$. By standard limit arguments [EGA], there exists a Nisnevich neighbourhood $V'$ of $v\in V$, and a section $V'\rightarrow V\times_\sX X$.
\end{proof}

\begin{corollary}
	Let $\sX$ be a quasi-separated algebraic stack. Then $SH_\lhd (\sX)\simeq SH_{ext}^{\otimes}(\sX)$, whenever they are defined.
\end{corollary}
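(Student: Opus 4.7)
The plan is to reduce the statement immediately to the already-available comparison result in \cite{chow21}. By the cited \cite[Corollary 2.5.4]{chow21}, the equivalence $SH_\lhd(\sX)\simeq SH_{ext}^{\otimes}(\sX)$ holds as soon as $\sX$ admits a smooth morphism $X\to\sX$ from a scheme (or algebraic space) that has Nisnevich-local sections; in our terminology this is precisely a smooth-Nisnevich covering. Thus the only thing to verify is the existence of such a covering for every quasi-separated algebraic stack.

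This existence statement is exactly Theorem \ref{theorem-main}(1): for a quasi-separated algebraic stack (with separated diagonal, as per our conventions) over any base scheme $S$, we produced a scheme $X$ and a smooth-Nisnevich covering $X\to\sX$. Feeding this $X\to\sX$ into \cite[Corollary 2.5.4]{chow21} yields the desired equivalence of stable motivic categories.

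The only subtlety to flag is the phrase ``whenever they are defined'': $SH_\lhd(\sX)$ is defined by right Kan extension from smooth schemes and hence exists for arbitrary (qcqs, say) stacks, while $SH_{ext}^{\otimes}(\sX)$ is defined on the subcategory of stacks admitting smooth-Nisnevich covers. Theorem \ref{theorem-main}(1) shows that under our running conventions this subcategory already contains all quasi-separated stacks, so the hypothesis of \cite[Corollary 2.5.4]{chow21} is automatic and the two categories agree on the overlap of their domains. No further argument is needed; the main content of the corollary is the geometric input of Theorem \ref{theorem-main}, and the step one might worry about — namely verifying the Nisnevich-local section hypothesis of \cite{chow21} — is precisely what we have already arranged.
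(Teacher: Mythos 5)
Your argument is exactly the paper's: invoke \cite[Corollary 2.5.4]{chow21}, whose only hypothesis (existence of a smooth presentation with Nisnevich-local sections, i.e.\ a smooth-Nisnevich covering) is supplied by Theorem \ref{theorem-main}(1). The paper states this more tersely, but the reduction and the geometric input are identical; your remark on ``whenever they are defined'' is a harmless elaboration.
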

\begin{proof}
	The proof in \cite[Corollary 2.5.4]{chow21} works verbatim using a smooth-Nisnevich covering $X\rightarrow \sX$.
\end{proof}

\begin{remark}
	Theorem \ref{MT} also improves \cite[Corollary 12.28]{khan2021generalized} in a similar fashion.
\end{remark}

\appendix

\bibliography{mybib-JACK-EDIT.bib}
\bibliographystyle{amsalpha}

\end{document}